\def\l@subsection{\@tocline{2}{0pt}{2.5pc}{5pc}{}}
\renewcommand\tocchapter[3]{%
  \indentlabel{\@ifnotempty{#2}{\ignorespaces#2.\quad}}#3%
}
\newcommand\@dotsep{4.5}
\def\@tocline#1#2#3#4#5#6#7{\relax
  \ifnum #1>\c@tocdepth 
  \else
    \par \addpenalty\@secpenalty\addvspace{#2}%
    \begingroup \hyphenpenalty\@M
    \@ifempty{#4}{%
      \@tempdima\csname r@tocindent\number#1\endcsname\relax
    }{%
      \@tempdima#4\relax
    }%
    \parindent\z@ \leftskip#3\relax \advance\leftskip\@tempdima\relax
    \rightskip\@pnumwidth plus1em \parfillskip-\@pnumwidth
    #5\leavevmode\hskip-\@tempdima{#6}\nobreak
    \leaders\hbox{$\m@th\mkern \@dotsep mu\hbox{.}\mkern \@dotsep mu$}\hfill
    \nobreak
    \hbox to\@pnumwidth{\@tocpagenum{#7}}\par
    \nobreak
    \endgroup
  \fi}
\renewcommand\csname r@tocindent0\endcsname{0pt}
\def\l@subsection{\@tocline{2}{0pt}{2.5pc}{5pc}{}}
\newtheorem{theorem}{Theorem}[section]
\newtheorem{lemma}[theorem]{Lemma}
\newtheorem{corollary}[theorem]{Corollary}
\newtheorem{definition}[theorem]{Definition}
\newtheorem{remark}{Remark}[section]
\newcommand{\Z}{\mathbb{Z}}
\renewcommand\thetable{\thesection.\@arabic\c@table}
\title[Survival of one dimensional renewal contact process]{Survival of one dimensional renewal contact process}
\author[R. Santos, M. E. Vares]{Rafael Santos$^1$, Maria Eul\'alia Vares$^2$}
\thanks{1. Instituto de Matem\'atica. Universidade Federal
do Rio de Janeiro, RJ, Brazil.
E-mail: rafaels@dme.ufrj.br. Supported by CNPq, Brazil grant 151736/2022-7}
\thanks{2. Instituto de Matem\'atica. Universidade Federal
do Rio de Janeiro, RJ, Brazil.
E-mail: eulalia@im.ufrj.br. Partially supported by CNPq, Brazil grant 310734/2021-5, and FAPERJ, Brazil grant CNE E-26/200.442/2023}
\date{\today}
\subjclass[2020]{60K35, 60K05, 82B43}
\begin{document}

\begin{abstract}
The renewal contact process, introduced in $2019$ by Fontes, Marchetti, Mountford, and Vares, extends the Harris contact process in $\mathbb{Z}^d $ by allowing the possible cure times to be determined according to independent renewal processes (with some interarrival distribution $\mu$) and keeping the transmission times determined according to independent exponential times with a fixed rate $\lambda$. We investigate sufficient conditions on $\mu$ to have a process with a finite critical value $\lambda_c$ for any spatial dimension $d \geq 1$. In particular, we show that $\lambda_c$ is finite when $\mu$ is continuous with bounded support or when $\mu$ is absolutely continuous and has a decreasing hazard rate.

\textit{Keywords:} Contact process, percolation, renewal process.

\end{abstract}

\maketitle



\section{Introduction}
\label{intro}

The classical contact process, introduced by Harris \cite{Harris} in $1974$, is a model for the spread of infectious diseases and has been intensively studied (see for instance \cite{Durrett} and \cite{Liggett}). 
Several variants and extensions appeared in the literature, including the Renewal Contact Process, which is the model studied in this paper and was introduced in \cite{FMMV1}, \cite{FMV2}, motivated by questions regarding long range percolation.

In both models, the sites of $\mathbb{Z}^d$ represent individuals that can be healthy or infected, and the state of the population at time $t$ is represented by a configuration $\xi_t \in \{0,1\}^{\mathbb{Z}^d}$, where $\xi_t(x) = 0$ means that the individual $x$ is healthy at time $t$ and $\xi_t(x) = 1$ means that the individual $x$ is sick at time $t$. The classical model considers a Markovian evolution: Infected individuals become healthy at rate $1$ independently of everything else, and healthy individuals become sick at a rate equal to a given parameter $\lambda$ times the number of infected neighbors. The Renewal Contact Process extends this model by allowing the possible cure times to be determined by i.i.d. renewal processes with an interarrival distribution $\mu$ on $[0,\infty)$ such that $\mu\{0\} < 1$. This model will be denoted by RCP($\mu$). This additional flexibility comes with the cost of losing the Markov property, which brings new difficulties.

In \cite{Harris2}, the classical contact process was alternatively defined using a percolation structure, known as \textit{Harris graphical construction}, based on a system of infinitely many independent Poisson point processes. Besides being a very useful tool, Harris construction immediately suggests the consideration of more general percolation structures, with Poisson point processes being replaced by more general point processes. Natural variants include the RCP($\mu$) and the models in \cite{HUVV}, where, besides the contact process on dynamic edges introduced in \cite{LR}, the authors consider a version of RCP with renewals on the transmission marks. (See also \cite{FMUV}, where the results of \cite{FMV2} have been strongly refined, and \cite{F} for a short survey.)

For sake of completeness, we now recall the basic definition, based on two independent sequences of point processes:  

\noindent $\bullet$ $(\mathcal{R}_x)_{x \in \mathbb{Z}^d}$ is a family of independent renewal processes with interarrival distribution $\mu$. $\mathcal{R}_x$ describes the cure marks at $x$. When the specific vertex is not important we will denote simply by $\mathcal{R}$ the renewal process constructed in the same way as  $\mathcal{R}_x$.

\noindent $\bullet$ $(\mathcal{N}_{x,y})_{(x,y) \in \mathcal{E}}$ is a family of independent Poisson processes with rate $\lambda$. Here $\mathcal{E}$ denotes the set of ordered pairs of nearest neighbors in $\mathbb{Z}^d$. $\mathcal{N}_{x,y}$ describes the times when the vertex $x$ tries to infect $y$. 

 The RCP($\mu$) is described in terms of (time oriented) paths, also called {\em infection paths}.  A path from $(x,s)$ to $(y,t)$ for $x,y \in \Z^d$ and $s< t$ is a c\`adl\`ag function $\gamma: [s,t] \ \rightarrow \ \Z^d$ so that: (i) $\gamma (s)= x$;
(ii) $\gamma (t)= y$; (iii) $\forall u \in [s,t],  u \notin  \mathcal{R}_{\gamma (u)}$; and
(iv) $\forall u \in [s,t]$, if  $\gamma (u-) \neq \gamma (u)$, then $u \in  N_{\gamma (u-),\gamma (u)}$.  
In particular, if $A \subset \Z^d$, our RCP($\mu$) with infection parameter $\lambda$ and initially infected set $A \subset \Z^d$ is defined by
$$
\xi^A_t(y) \ = \ 1 \iff  \  \exists \mbox{ a path from } (x,0) \mbox{ to } (y,t), \mbox {for some }x \in A.
$$

Among the main questions of interest regarding RCP($\mu$) we want to better understand the survival and extinction of the infection depending on the parameter $\lambda$ and the distribution $\mu$. In this direction, given $\mu$, the critical parameter for RCP($\mu$) is defined as
\[
\lambda_c(\mu) := \inf\{\lambda: P(\tau^{\{0\}} = \infty) > 0\}, 
\]
\noindent where $\tau^{\{0\}} := \inf\{t: \xi_t^{\{0\}} \equiv 0\}$ (with the usual convention that $\inf \emptyset = \infty$). The articles \cite{FMMV1} and \cite{FGS} (this last one for finite volumes) obtained sufficient conditions on $\mu$ to assure that $\lambda_c(\mu) = 0$ (when $\mu$ has a heavy tail), while \cite{FMV2} obtained sufficient conditions on $\mu$ to assure that $\lambda_c(\mu) > 0$, and which were significantly relaxed in \cite{FMUV}. In particular, the authors in \cite{FMUV} proved that $\lambda_c(\mu) > 0$ whenever $\int x^\alpha \mu(dx) < \infty$ for some $\alpha > 1$. Even combining the results of \cite{FMMV1} and \cite{FMUV}, there remains an important gap where we do not know whether the critical parameter is zero or not. 

As we can see from the previous paragraph, the study of sufficient conditions to assure that $\lambda_c(\mu)$ is zero or positive is already intense. Nevertheless, it is also natural to ask whether $\lambda_c(\mu)$ is finite or infinite and this question was very little explored in the literature until now. Naturally if $\mu$ is degenerate (there exists $c>0$ such that $\mu\{c\}= 1$), the infection always dies out (at time $c$) and consequently $\lambda_c(\mu) = \infty$. It is natural to conjecture that except for this degenerate case (simultaneous extinction), the critical parameter should be finite, but we still do not have a proof for that in the literature. Since $\lambda_c(\mu)$ is non-increasing in $d$, it is enough to consider the one-dimensional case. When $d \geq 2$, this problem becomes much simpler since we can construct an infinite infection path using each vertex only once (i.e. through a coupling with supercritical oriented percolation), avoiding dependencies within each renewal process. This idea was explored in the proof of Theorem 1.3 (ii) in \cite{HUVV}, which states that the critical parameter is finite for a more general version of the renewal contact process. The proof for RCP($\mu$) is similar.

Considering $d=1$, and obviously excluding the cases where we already know that $\lambda_c(\mu) = 0$, the only general result that we are aware of regarding $\lambda_c(\mu)< \infty$  is the one described in Remark $2.2$ of \cite{FMUV}. It argues that if $\mu$ has a density and a bounded decreasing hazard rate, then $\lambda_c(\mu) $ is finite. The proof uses a coupling of the RCP($\mu$) with the classical contact process using the construction described in \cite{FMV2}. In particular, if $\mu$ and $\tilde{\mu}$ are absolutely continuous distributions on $[0,\infty)$ with hazard rates $h_{\mu}$ and $h_{\tilde{\mu}}$ such that $h_{\mu}(t) \leq h_{\tilde{\mu}}(t)$ for any $t \geq 0$ and moreover $h_{\mu}(t)$ is decreasing in $t$, then $\lambda_c(\mu) \leq \lambda_c(\tilde{\mu})$. Our Theorem \ref{theo-dfr} below improves this by allowing the hazard rate to be unbounded near the origin. This covers for example the case where the times between cure marks have a Weibull distribution with shape parameter smaller than one.

Theorem \ref{theo-bounded} is the other main result of this paper. It states that $\lambda_c(\mu)$ is finite whenever $\mu$ is continuous (i.e. $\mu\{x\}=0$ for all $x$) and has a bounded support.

\begin{theorem}
Let $\mu$ be a continuous distribution such that $\mu[0,b]=1$ for some $b < \infty$. Then $\lambda_c(\mu) < \infty$.
\label{theo-bounded}
\end{theorem}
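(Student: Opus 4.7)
The plan is a block renormalization, comparing the RCP$(\mu)$ with a $1$-dependent oriented percolation on $\Z^2$; the Liggett-Schonmann-Stacey theorem then reduces survival to making the good-block probability close enough to $1$ by taking $\lambda$ large. The useful consequence of the hypotheses is that, replacing $b$ by the essential supremum of $\mu$ if needed, continuity and the bounded-support assumption give
\[
 p_\eta := \mu\bigl((b-\eta, b]\bigr) > 0 \qquad \text{for every } \eta \in (0, b).
\]
Call a renewal gap at a site \emph{long} if it has length at least $b - \eta$; by the renewal strong law of large numbers, long gaps occur at every site with positive density in time.

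Next, I would fix a space scale $L$ and a large time scale $T$. For each space-time box $B_{k,n}=[kL,(k+1)L]\times[nT,(n+1)T]$, declare a \emph{good event} $G_{k,n}$ requiring that (i) each site inside the box has many long gaps well spread across $[nT,(n+1)T]$, and (ii) each edge of the box carries enough transmission arrows, in particular in short time windows at the start of each long gap at the incident sites. Item (i) has probability tending to $1$ as $T\to\infty$ by the renewal SLLN, and item (ii) has probability tending to $1$ as $\lambda\to\infty$ for fixed $T$; since renewal processes are independent across sites and transmission Poisson processes across edges, $\{G_{k,n}\}_{k,n}$ is $1$-dependent and translation invariant.

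The central step -- and the main obstacle -- is to show that on $G_{k,n}$ the infection propagates through the box: if it enters the bottom of $B_{k,n}$ in a ``fresh'' state (i.e.\ present at some site right at the start of a long gap), then it exits into an adjacent box $B_{k\pm1,\,n+1}$ in the same fresh state. The difficulty is the lack of the Markov property: an infection arriving deep inside a partially consumed gap has short residual life and may fail to transmit. I would handle this by designing $G_{k,n}$ so that the propagation always ``chains along long gaps'': inside each block one only uses a transmission arrow that arrives within a short time window $\delta$ at the start of a long gap at the target site, so that at the target site we are again in a fresh state with effective life-time at least $b-\eta-\delta$. Such long-gap-plus-prompt-arrow coincidences occur at density bounded below by $p_\eta(1-e^{-\lambda\delta})/\int y\,\mu(dy)$ in space-time, which can be made as large as needed by taking $\lambda$ and $T$ large, forcing the chain of fresh states to succeed across the box with probability close to $1$.

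Once $P(G_{k,n})$ exceeds the percolation threshold for $1$-dependent supercritical oriented percolation on $\Z^2$, the Liggett-Schonmann-Stacey comparison theorem yields an infinite open cluster in the renormalized lattice, which by the coupling with the fresh-state propagation translates into survival of the infection started from the origin. This gives $\lambda_c(\mu) \leq \lambda<\infty$ and finishes the proof.
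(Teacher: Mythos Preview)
Your renormalization scaffolding and the appeal to Liggett--Schonmann--Stacey are the right shape, and you correctly identify that bounded support is what buys finite-range dependence in time. But the heart of the argument --- the ``chain of fresh states along long gaps'' --- has a genuine gap.

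Suppose the infection sits at site $x$ at time $s$, the start of a long gap; it is safe there until $s+b-\eta$. For your chain to advance you need site $x\pm 1$ to \emph{begin} a long gap at some $t\in[s,\,s+b-\eta-\delta]$, together with an arrow in $[t,t+\delta]$. The arrow is cheap for large $\lambda$, but the event ``$x\pm 1$ starts a long gap inside a fixed window of length $b-\eta-\delta$'' has probability of order at most $(b-\eta)\,p_\eta\big/\!\int y\,\mu(dy)$, a number determined by $\mu$ and $\eta$ and \emph{not} driven to $1$ by increasing $\lambda$ or $T$. If this step fails, your fresh state at $x$ expires within time $b$ and your plan contains no mechanism to keep the infection alive for a retry. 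Consequently the probability that the chain crosses the block stays bounded away from $1$, and $P(G_{k,n})$ cannot be pushed above the LSS threshold by this route. (Relatedly, the sentence ``density \ldots\ can be made as large as needed by taking $\lambda$ and $T$ large'' is not right: that density is bounded by $p_\eta\big/\!\int y\,\mu(dy)$ regardless of $\lambda$, and $T$ does not enter a density at all.)

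The paper's mechanism is different and supplies exactly the missing survival step. Continuity of $\mu$ is used not to produce long gaps but to show that two independent renewal processes almost surely have no common mark, and quantitatively that with probability $\ge 1-\varepsilon$ the cure marks of $x$ and $x+1$ inside a bounded time window are pairwise separated by at least some $v_0>0$ (Lemma~\ref{lemma1-bounded}, Corollary~\ref{coro1-bounded}, together with the bound on the number of marks in Lemma~\ref{lemma2-bounded}). On that event the infection can \emph{ping-pong} between $x$ and $x+1$: whenever one site is about to be cured the other is still infected, and a rate-$\lambda$ transmission in a window of length $\ge v_0$ reinfects it with probability $1-e^{-\lambda v_0}$. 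Iterating over the bounded number of cure marks yields survival in the two-site box $\{x,x+1\}\times[t,t+3b]$ with probability $\ge 1-\varepsilon$ for $\lambda$ large (Lemma~\ref{lemmavb-bounded}). This two-site survival is precisely the ``wait as long as needed'' device absent from your plan; combined with an easy horizontal crossing (Lemma~\ref{lemmahb-bounded}) it drives the block comparison.
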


\begin{theorem}
Let $\mu$ be an absolutely continuous distribution on $[0,\infty)$ with density $f$ and distribution function $F$ such that its hazard rate $\frac{f(t)}{1-F(t)}$ is decreasing in $t$. Then $\lambda_c(\mu) < \infty$. 

\label{theo-dfr}
\end{theorem}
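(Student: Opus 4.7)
The plan is to deduce Theorem~\ref{theo-dfr} from Theorem~\ref{theo-bounded} via a truncation of $\mu$, using the hazard-rate comparison of Remark~2.2 of~\cite{FMUV} as the bridge. Fix any $b > 0$ with $F(b) > 0$ and let $\mu_b$ be the conditional law of $X \sim \mu$ given $X \leq b$, i.e.\ the distribution with density $f(t)/F(b)$ on $[0,b]$. Then $\mu_b$ is continuous and satisfies $\mu_b[0,b]=1$, so Theorem~\ref{theo-bounded} already delivers $\lambda_c(\mu_b) < \infty$. The whole task is therefore to transfer this bound to $\mu$, by establishing $\lambda_c(\mu) \leq \lambda_c(\mu_b)$.

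The first hypothesis of the FMUV comparison, that $h_\mu$ is decreasing, is exactly the assumption of Theorem~\ref{theo-dfr}. The second hypothesis, $h_\mu \leq h_{\mu_b}$, is a one-line computation:
\[
h_{\mu_b}(t) \;=\; \frac{f(t)}{F(b)-F(t)} \;\geq\; \frac{f(t)}{1-F(t)} \;=\; h_\mu(t) \qquad (t \in [0,b)),
\]
using only that $F(b) \leq 1$; for $t \geq b$ the inequality is trivial because $\mu_b$ places no mass there. Hence the comparison from~\cite{FMUV} yields $\lambda_c(\mu) \leq \lambda_c(\mu_b) < \infty$, which is Theorem~\ref{theo-dfr}.

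The one subtlety I anticipate is that $h_{\mu_b}$ blows up as $t \to b^-$, while the coupling of~\cite{FMV2} underlying the FMUV comparison was originally formulated for bounded hazard rates. This should not be an obstruction: one can realise the $\mu_b$-cure marks as a renewal process (whose inter-arrivals are a.s.\ strictly less than $b$, so the local intensities $h_{\mu_b}(\tilde s_k - \tilde s_{k-1})$ are a.s.\ finite) and obtain the $\mu$-cure marks by thinning each $\mu_b$-mark at $\tilde s_k$ with probability $h_\mu(\tilde s_k - s)/h_{\mu_b}(\tilde s_k - \tilde s_{k-1})$, where $s$ is the last retained (i.e.\ $\mu$-)mark. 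This probability lies in $[0,1]$ precisely because $s \leq \tilde s_{k-1}$ and $h_\mu$ is decreasing, giving $h_\mu(\tilde s_k - s) \leq h_\mu(\tilde s_k - \tilde s_{k-1}) \leq h_{\mu_b}(\tilde s_k - \tilde s_{k-1})$. A standard intensity computation identifies the retained marks as a $\mu$-renewal process, and being a sub-process of the $\mu_b$-cures they represent strictly fewer cures, so RCP$(\mu_b)$ is pathwise dominated by RCP$(\mu)$ and survival transfers in the required direction.
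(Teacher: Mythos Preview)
Your argument is correct and takes a genuinely different route from the paper. The paper proves Theorem~\ref{theo-dfr} \emph{from scratch}: Section~\ref{dfr} essentially redoes the block construction of Section~\ref{bounded}, but now the crossing estimates must hold \emph{conditionally on any past} of the cure processes (hence the $\underline{\mathbb{P}}^\mu_t$, $\overline{\mathbb{P}}^\mu_t$ notation). The DFR hypothesis enters precisely to supply this uniformity-over-history, via Lemmas~\ref{lemma1}--\ref{lemma2}; these replace the finite-range dependence that bounded support gave in Section~\ref{bounded}, and the proof again concludes with Liggett--Schonmann--Stacey.

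You instead recognise Theorem~\ref{theo-dfr} as a corollary of Theorem~\ref{theo-bounded} through the hazard-rate comparison. The only nontrivial point is the one you isolate: the \cite{FMV2} coupling behind Remark~2.2 of \cite{FMUV} uses a dominating homogeneous Poisson process, which is unavailable since $h_{\mu_b}(t)\to\infty$ as $t\to b^-$ and $h_\mu$ may blow up at $0$. Your fix---realise the $\mu_b$-renewal directly and retain each mark $\tilde s_k$ with probability $h_\mu(\tilde s_k-s)/h_{\mu_b}(\tilde s_k-\tilde s_{k-1})$---does work. The $\mathcal{G}_t$-intensity of the retained process (where $\mathcal{G}_t$ sees the $\mu_b$-marks and the thinning coins) is $h_{\mu_b}(t-\tilde s_{\text{last}})\cdot h_\mu(t-s)/h_{\mu_b}(t-\tilde s_{\text{last}})=h_\mu(t-s)$; since this is measurable with respect to the retained process's own filtration, the compensator descends, and the Watanabe-type characterisation of a renewal process by its hazard intensity identifies the retained marks as a $\mu$-renewal. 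The acceptance probability is a.s.\ finite and in $[0,1]$ exactly as you argue: $\mu_b$-interarrivals are a.s.\ in $(0,b)$, so $h_{\mu_b}$ is evaluated only where it is finite and positive, and DFR gives $h_\mu(\tilde s_k-s)\le h_\mu(\tilde s_k-\tilde s_{k-1})\le h_{\mu_b}(\tilde s_k-\tilde s_{k-1})$.

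What your approach buys is brevity and a clean structural statement: the DFR result is downstream of the bounded-support result plus a pathwise monotone coupling. What the paper's approach buys is self-containment (no compensator machinery) and explicit uniform-over-history bounds for DFR renewals (Lemmas~\ref{lemma1}--\ref{lemma2}) that may be useful in their own right.
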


In Section \ref{bounded} we have the proof of Theorem \ref{theo-bounded} and Section \ref{dfr} is dedicated to the proof of Theorem \ref{theo-dfr}. Both proofs involve coupling with supercritical oriented percolation. Differently from \cite{HUVV}, we need to find a way to do it using the same vertices more than once to construct an infinite path, and therefore it is necessary to control the dependencies that appear. Each proof addresses this problem in a different manner, and we still could not find a way to deal with it for any non-degenerate $\mu$, keeping us from proving the conjecture mentioned before. 

\section{$\mu$ continuous and with bounded support}
\label{bounded}

This section is devoted to proving Theorem \ref{theo-bounded}. Before this, we need to state and prove some useful lemmas and a few extra definitions are needed.

To begin, we state a lemma from \cite{HUVV} that gives a uniform estimate for renewal processes, and which will be important to many of the proofs presented in this section.

\begin{lemma}
\textbf{\cite[Theorem $4.1(ii)$]{HUVV}} Let $\mu$ be a continuous distribution on $[0,\infty)$ and let $\mathcal{R}$ be a renewal process with interarrival distribution $\mu$ started from some $\mathfrak{T} \leq 0$. Then, given $p_0 > 0$ there exists $w_0 = w_0(p_0) > 0$ such that uniformly on $\mathfrak{T}$ we have: 
\[
\sup_{t \geq 0} P(\mathcal{R} \cap [t,t+w_0] \neq \emptyset) \leq p_0.
\]
\label{lemma-HUVV}
\end{lemma}

\reversemarginpar

The next lemma gives us control over the number of renewal marks that appear on the timeline of a vertex during a finite period of time. For any finite set $A$, we denote its cardinality by $|A|$.\\

\begin{lemma}
 Assume that $\mu\{0\} <1$ and let $\mathcal{R}$ be a renewal process with interarrival distribution $\mu$ started from some $\mathfrak{T} \leq 0$. Then, for each $p_0 > 0$ and $s > 0$ there exists $K_0 = K_0(p_0,s)$ such that uniformly on $\mathfrak{T}$ we have:
\[
\sup_{t \geq 0} P( \ | \ \mathcal{R} \cap [t,t+s] \ | > K_0) \leq p_0.
\]
\label{lemma2-bounded}
\end{lemma} 

Lemmas \ref{lemma-HUVV} and \ref{lemma2-bounded} together allow us to state a lemma that gives control over how close the cure marks of two adjacent vertices inside a finite time interval are from each other.

\begin{lemma}
\label{lemma3-goodcures}
Let $\mu$ be a continuous distribution on $[0,\infty)$ and let $\mathcal{R}$, $\tilde{\mathcal{R}}$ be two independent renewal processes with interarrival distribution $\mu$ started from some $\mathfrak{T} \leq 0$. Then given $p_1 > 0$ and $s>0$ there exists $w_1 = w_1(p_1,s) > 0$ such that uniformly on $\mathfrak{T}$ we have:
\[
\sup_{t \geq 0} P\Big( {\rm{d}}\left(\mathcal{R}\cap[t,t+s],\tilde{\mathcal{R}}\cap[t,t+s] \right) \leq w_1 \Big) \leq p_1,
\]
where ${\rm{d}}(A,B)=\inf\{|x-y|\colon x \in A, y \in B\}$ stands for the usual minimal distance between two subsets $A, B$ of the real line, understood as infinity if one of them is empty. 
\end{lemma}

Now the next two lemmas state that if we choose $\lambda$ large enough, the infection can survive inside specific space-time boxes with high probability. For both lemmas, consider the RCP($\mu$) where $\mu$ is a continuous distribution such that $\mu[0, b] = 1$. 

\vspace{0.2cm}

For convenience we set the following definition. 

\begin{definition}\label{crossing}
\noindent (a) Given bounded subsets of $\Z \times \mathbb R$,  $C$, $D$ and $H$, we say there is a crossing from $C$ to $D$ in $H$ if there exists a path $\gamma: [s,t] \ \rightarrow  H$  such that 
$(\gamma (s),s )  \in C$ and $(\gamma (t),t ) \in  D$.

\noindent (b) Given $b>0, t\geq 0$ and $x \in \Z$, consider the space-time boxes
$$
B_h(x,t)=[x,x+3] \times [t,t+b], \quad B_v(x,t)=[x,x+1] \times [t,t+3b], 
$$
and set
\begin{eqnarray*}
C_h(x,t)&=&\{ \text{there is a crossing from } (x,t) \text { to }\{x+3\} \times [t, t+b] \text { in } B_h(x,t)\},\\
C_v(x,t)&=&\{ \text{there is a crossing from } (y,t) \text { to }\{x,x+1\} \times \{t+3b\}  \text { in } B_v(x,t), y=x,x+1 \}.\\
\end{eqnarray*}
\end{definition}

\begin{lemma}
Fix $b > 0$ and $\epsilon \in (0,1)$. There exists $\lambda_h = \lambda_h(\epsilon) < \infty$ such that:
\[
\inf_{t \geq 0, x \in \Z} P(C_h(x,t)) \geq 1-\epsilon, \ \mbox{if} \ \lambda \geq \lambda_h.
\]
\label{lemmahb-bounded}
\end{lemma}

\begin{lemma}
Fix $b > 0$ and $\epsilon \in (0,1)$. There exists $\lambda_v = \lambda_v(\epsilon) < \infty$ such that:
\[
\inf_{t \geq 0, x \in \Z} P(C_v(x,t)) \geq 1-\epsilon, \ \mbox{if} \ \lambda \geq \lambda_v.
\]
\label{lemmavb-bounded}
\end{lemma}

We start by proving  the above lemmas.

\begin{proof}
\textit{(Lemma \ref{lemma2-bounded})} Let $(X_j)_{j \geq 1}$ be a sequence of i.i.d random variables with distribution $\mu$. Then, we have

\vspace{-0.25cm}
\begin{equation}
\sup_{t \geq 0} P(\ | \ \mathcal{R} \cap [t,t+s] \ | > K_0) \leq P\Big(\sum_{j=1}^{K_0} X_j < s\Big) = P\Big(\frac{1}{K_0}\sum_{j=1}^{K_0} X_j < \frac{s}{K_0}\Big).
\label{eqlgn-bounded2}
\end{equation}
Since the random variables $(X_j)_{j \geq 1}$ are non-negative and not identically null, the strong law of large numbers tells us that $\frac{1}{K}\sum_{j=1}^{K} X_j$ converges almost surely to $E[X_1] \in (0, \infty]$, as $K \to \infty$. It then follows at once from \eqref{eqlgn-bounded2} that for each  $p_0 > 0$ and $s > 0$, we can choose $K_0$ as in the statement.

\end{proof}

\begin{proof}
\textit{(Lemma \ref{lemma3-goodcures})} The uniformity on $t\geq 0$ (and consequently also for $\mathfrak{T}\leq 0$) follows at once from its validity in Lemmas \ref{lemma-HUVV} and \ref{lemma2-bounded}. 
Given $p_1 > 0$ we use Lemma \ref{lemma2-bounded} for $\mathcal{\tilde R}$ with $p_0=p_1/2$ and let $\tilde {K}_0=K_0(p_1/2,s)$. We then use Lemma  \ref{lemma-HUVV} with $p_0=p_1/{2\tilde K_0}$, letting $w_1= \frac{1}{2}w_0(\frac{p_1}{2\tilde{K}_0})$. It then follows that 
$$P\Big( {\rm{d}}\left(\mathcal{R}\cap[t,t+s],\tilde{\mathcal{R}}\cap[t,t+s] \right) \leq w_1 \Big) \leq \frac{p_1}{2\tilde{K}_0}\tilde K_0 + \frac{p_1}{2}=p_1.$$
\end{proof}

\begin{proof}
\textit{(Lemma \ref{lemmahb-bounded})} Fix $w_0 \in (0,b)$ satisfying Lemma \ref{lemma-HUVV} for $p_0 = 1- (1-\epsilon)^{\frac{1}{6}}$ and define the events $F_i = F_i(x,t)$ and $G_i = G_i(x,t)$ for $i=0,1,2$ as the following:
\begin{align*}
F_i &= \Big\{\mathcal{R}_{x+i} \cap \mbox{$[t,t+w_0]$} = \emptyset\Big\},\\
G_i &= \Big\{\mathcal{N}_{x+i,x+i+1} \cap \mbox{$[t+\frac{i w_0}{3},t+\frac{(i+1)w_0}{3}]$} \neq \emptyset\Big\}.
\end{align*}

Note that the occurrence of $\bigcap_{i=0}^2 (F_i \cap G_i)$ implies the occurrence of $C_h(x,t)$, as illustrated in Figure \ref{fig1-bounded-revised}. So, our next step will focus on finding some useful lower bounds on the probabilities of the events $F_i$ and $G_i$ (the latter one depending on $\lambda$). Applying Lemma \ref{lemma-HUVV} we have that, for $i=0,1,2$,
\begin{equation}
\label{F-bounded}
\inf_{t \geq 0, x \in \mathbb{Z}} P(F_i) = \inf_{t \geq 0} P(\mathcal{R} \cap [t,t+w_0] = \emptyset) \geq (1-\epsilon)^{\frac{1}{6}}.
\end{equation}

\begin{figure}[ht!]
\centering
\includegraphics[scale=0.4]{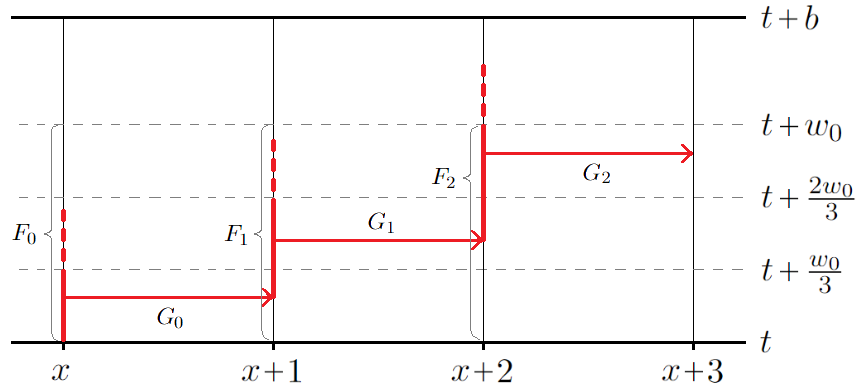}
\caption{Illustration of the events $F_i$ and $G_i$ occurring together, for $i=0,1,2$, implying the occurrence of $C_h(x,t)$. The red arrows indicate transmission marks (described by the events $G_0$, $G_1$ and $G_2$) and the thick red lines indicate a path without cure marks (consequence of the events $F_0$, $F_1$ and $F_2$).}
\label{fig1-bounded-revised}
\end{figure}

Now let $Y_{\lambda}$ denote a random variable with exponential distribution with rate $\lambda$. Since $w_0 > 0$, we can fix $\lambda_h$ such that $P(Y_{\lambda_h} \leq \frac{w_0}{3}) = (1-\epsilon)^{\frac{1}{6}}$, i.e., $\lambda_h = \frac{-3}{w_0}\log(1-(1-\epsilon)^{\frac{1}{6}})$. If $\lambda \geq \lambda_h$, by the Markov property of the  Poisson process, we have that for $i=0,1,2$,
\begin{equation}
\label{G-bounded}
\inf_{t \geq 0, x \in \mathbb{Z}} P(G_i) =  P\Big(Y_{\lambda} \leq \frac{w_0}{3}\Big) \geq (1-\epsilon)^{\frac{1}{6}}.
\end{equation}
But the events $F_0,G_0,F_1,G_1,F_2,G_2$ are all independent, and therefore the conclusion follows from \eqref{F-bounded} and \eqref{G-bounded}.
\end{proof}

\begin{proof}
\textit{(Lemma \ref{lemmavb-bounded})} Since $B_v(x,t)$ contains only two sites, the path has to keep jumping between these two sites until time $t+3b$, avoiding cure marks, as illustrated in Figure \ref{fig2-bounded}.

\begin{figure}[ht!]
\centering
\includegraphics[scale=0.3]{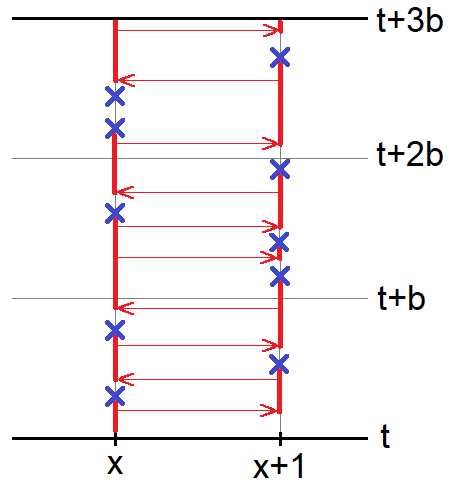}
\caption{Illustration of a vertical crossing inside the box $B_v(x,t)$. The red arrows indicate transmission marks, the blue X's indicate cure marks and the thick red lines indicate a path without cure marks.}
\label{fig2-bounded}
\end{figure}

Fix $p_0 = p_1 = \frac{1-\sqrt{1-\epsilon}}{5}$ and $s = 3b$. Apply Lemmas \ref{lemma-HUVV}, \ref{lemma2-bounded} and \ref{lemma3-goodcures} for $\mathcal{R}_x$ and $\mathcal{R}_{x+1}$, letting $\tilde{w}_0 = w_0(p_0)$, $\tilde{K}_0 = K_0(p_0,s)$ and $\tilde{w}_1 = w_1(p_1,s)$. Now defining by $H_t$ the intersection of the events

\[
\Big\{ \mathcal{R}_x \cap [t,t+\tilde{w}_0] = \emptyset,  \ \mathcal{R}_{x+1} \cap [t,t+\tilde{w}_0] = \emptyset \Big\},
\]

\[
\Big\{ | \ \mathcal{R}_x \cap [t,t+3b] \ | \leq \tilde{K}_0, \ | \ \mathcal{R}_{x+1} \cap [t,t+3b] \ | \leq \tilde{K}_0 \Big\},
\]

\[
\Big\{{\rm{d}}\Big(\mathcal{R}_x\cap[t,t+3b],\mathcal{R}_{x+1}\cap[t,t+3b] \Big) > \tilde{w}_1  \Big\},
\]

\vspace{0.2cm}

\noindent it follows from these three lemmas that $\inf_{t \geq 0} P(H_t) \geq \sqrt{1-\epsilon}$. The occurrence of $H_t$ guarantees that  a path with the desired crossing property can be constructed by suitably hopping between $x$ and $x+1$ at most $2 \tilde{K_0}$ times to reach time $t+3b$ and that it will always encounter an interval of length at least $\tilde{w} = \tilde{w}_0 \wedge \tilde{w}_1$ without cure marks to execute each hop. Fix $\lambda_v$ such that $P(Y_{\lambda_v} \leq \tilde{w}) = (1 - \epsilon)^{\frac{1}{4\tilde{K}_0}}$, i.e., $\lambda_v = \frac{-1}{\tilde{w}}\log(1-(1-\epsilon)^{\frac{1}{4\tilde{K}_0}})$, and hence it follows by the Markov property of the Poisson process that for any $\lambda \geq \lambda_v$,
\[
\inf_{t \geq 0,x \in \mathbb{Z}} P(C_v(x,t)) \geq \inf_{t \geq 0} P(H_t) \left[P(Y_{\lambda_v} \leq \tilde{w})\right]^{2\tilde{K}_0} \geq 1 - \epsilon.
\]

\end{proof}

Now we are finally ready to prove Theorem \ref{theo-bounded}.

\begin{proof}

\textit{(Theorem \ref{theo-bounded})} Given $x \in \mathbb{Z}$, $t > 0$ and $b > 0$ such that $\mu[0,b] = 1$, consider the space-time boxes $K(x,t) = [x,x+3] \times [t,t+3b]$, which will be called blocks in our proof. The block $K(x,t)$ is said to be \textit{good} if $C_h(x,t) \cap C_h(x,t+2b) \cap C_v(x,t) \cap C_v(x+2,t)$ occurs, and \textit{bad}  otherwise. That is, a block will be called good when four specific smaller boxes inside it are crossed in the sense of the corresponding events $C_h$ and $C_v$. This is illustrated on the left side image of Figure \ref{fig3}, in which $K(0,0)$, $K(2,0)$ and $K(0,3b)$ are good blocks and $K(2,3b)$ is a bad block.

Let's consider a graph $G$ in $\mathbb{Z}^2$ with oriented edges $ \langle z,z + (1,0) \rangle$ and $\langle z,z + (0,1) \rangle$. Define a percolation system on $G$ related to the RCP($\mu$) by stating that the vertex $(x,y)$ is open if $K(2x,3yb)$ is a good block in the RCP($\mu$), and closed otherwise. Moreover, each edge is open if both endvertices are open. Note that in this construction, the blocks are fitted together (and overlap in the horizontal direction) allowing us to concatenate paths of adjacent good blocks, and implying that when the  oriented edge model percolates, the associated RCP($\mu$) will survive with positive probability. See Figure \ref{fig3} for an example of this  percolation model being constructed. Given $\epsilon > 0$, it follows from Lemmas \ref{lemmahb-bounded}, \ref{lemmavb-bounded} and union bound that the probability of a site of $G$ being open is at least $1 - \epsilon$ if we take $\lambda \geq \lambda_h(\frac{\epsilon}{4}) \vee \lambda_v(\frac{\epsilon}{4})$.

\vspace{-0.1cm}

\begin{figure}[h]
    \centering
    \includegraphics[scale=0.3]{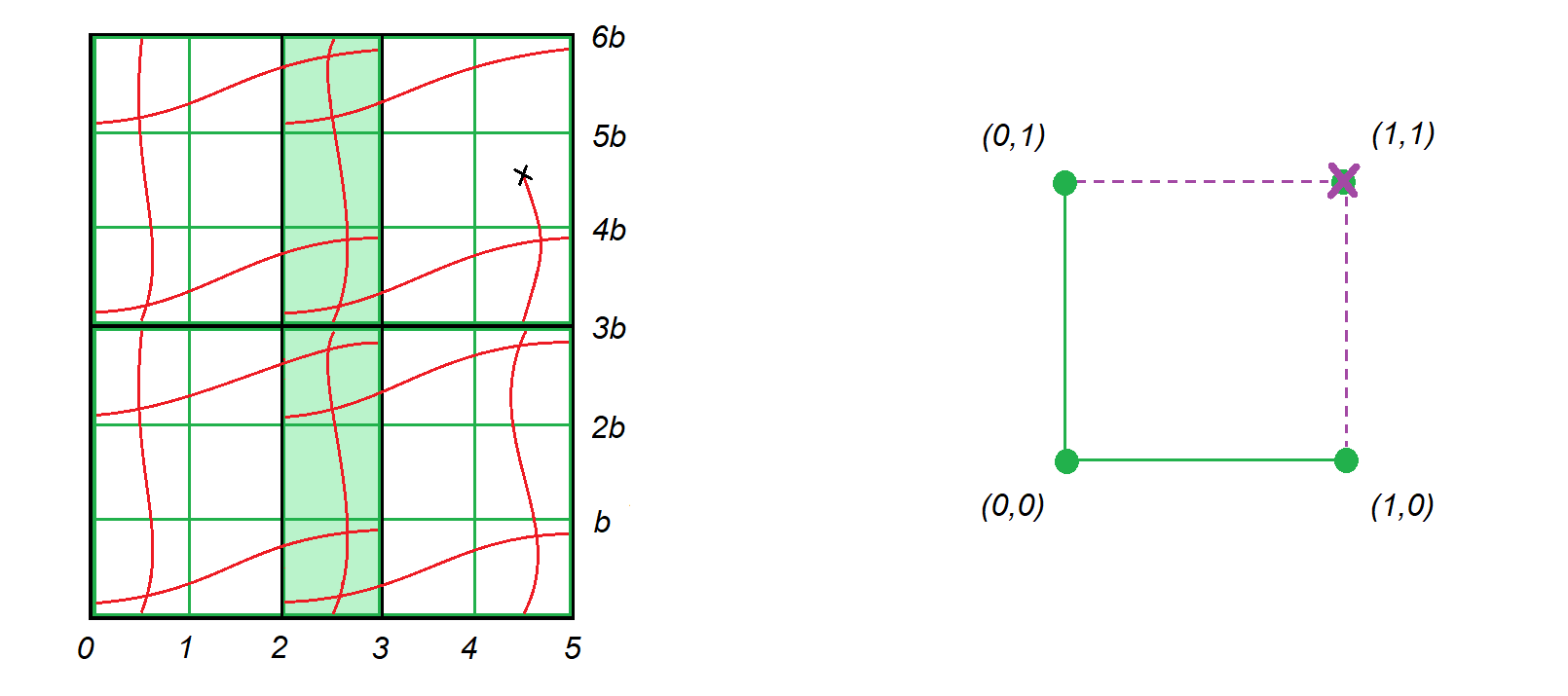}
    \caption{Example of the relation between the RCP($\mu$) and the  percolation model on $G$, mapping four blocks into four vertices. On the left side image, each curved red line does not represent a path itself, but symbolizes that the associated horizontal or vertical box are being crossed, with exception of the one that reach an $\times$ symbol, and the painted green boxes in the middle are indicating the regions where two blocks overlaps each other.  On the right side image we have the corresponding sites and edges of $G$ where the green dots (lines) indicate the open sites (edges) and the purple $\times$ symbol (dashed line) indicates the closed sites (edges).}
    \label{fig3}
\end{figure}

\vspace{0.1cm}

Now we will argue that in this  oriented percolation model, the edges that do not share endvertices are independent. It is a consequence of three facts:

\begin{itemize}
\item The independence properties of cure (transmission) marks on (from) different vertices.

\vspace{0.1cm}

    \item  The transmission marks are determined by Poisson processes.

\vspace{0.1cm}

    \item The time between two consecutive cure marks in a vertex is bounded by $b$, which means that any time interval of length $b$ will have at least one cure mark in every vertex. It implies that the cure marks after time $t+b$ are independent of the cure marks before time $t$, for any $t \geq 0$. 
\end{itemize}

\vspace{0.1cm}

It follows that we have a finite range dependent bond model. Applying ~\cite[Theorem 0.0 (ii)]{LSS} and choosing  $\epsilon > 0$ small enough in the beginning, it follows that the RCP survives with positive probability if 
$\lambda \geq \lambda_h(\frac{\epsilon}{4}) \vee \lambda_v(\frac{\epsilon}{4})$.
\end{proof}

\section{$\mu$ with decreasing hazard rate}
\label{dfr}

This section is devoted to proving Theorem \ref{theo-dfr}. Again, we start by stating and proving some useful lemmas.  Analogously to those in Section \ref{bounded}, they provide suitable lower bounds for the probability of crossing events. The difference is that taking advantage of the assumption on the hazard rate, this can be done uniformly on the history of the renewal processes.

The first two lemmas guarantee that for any fixed $p>0$, we can find a fixed non-trivial interval so that, uniformly over the history of $\mathcal{R}_x$ up to time $t$, the conditional probability of no mark of the renewal process in this interval will be at least $1-p$.

\begin{lemma}
Consider $\mu$ satisfying the assumptions of Theorem \ref{theo-dfr} and let $\mathcal{R}$ be a renewal process with interarrival distribution $\mu$ started from some $\mathfrak{T} \leq 0$. Then, given $p_0 > 0$ there exists $w_0 = w_0(p_0) > 0$ such that uniformly on $\mathfrak{T}$ we have: 
\[
\sup_{t \geq 0} \sup_{k \geq 0} P \left( \left. \mathcal{R} \cap [t+k,t+k+w_0] \neq \emptyset \ \right| \ \mathcal{R} \cap [t,t+k] = \emptyset \right) \leq p_0.
\]
\label{lemma1-dfr}
\end{lemma}

\begin{lemma}
Consider $\mu$ satisfying the assumptions of Theorem \ref{theo-dfr} and let $\mathcal{R}$ be a renewal process with interarrival distribution $\mu$ started from some $\mathfrak{T} \leq 0$. 
Given $t \geq 0$, let $A_t$ denote the time elapsed since the last renewal of $\mathcal{R}$ previous to time $t$ (letting $A_t = t - \mathfrak{T}$ if $\mathcal{R} \cap [\mathfrak{T},t)$ is empty). Then, for any $p_0 > 0$, there exists $w_0 = w_0(p_0) > 0$ such that uniformly on $\mathfrak{T}$ we have
\[
\sup_{t \geq 0} \sup_{0 < a \leq t-\mathfrak{T}} P \left( \left. \mathcal{R} \cap [t,t+w_0] \neq \emptyset \ \right| \ A_t = a \right) \leq p_0.
\]
\label{lemma2-dfr}
\end{lemma}

Lemma \ref{lemma2-dfr} allows us to have some control on how close the cure marks of two adjacent vertices inside a finite time interval  can be from each other, uniformly on the history of the cure marks in these vertices. From now on, let $A_{x,t}$ be the time elapsed since the last renewal of $\mathcal{R}_x$ previous to time $t$ (letting $A_{x,t} = t - \mathfrak{T}$ if $\mathcal{R}_x \cap [\mathfrak{T},t)$ is empty).

\begin{lemma}
\label{goodcures-dfr}
Let $\mu$ satisfy the assumptions of Theorem \ref{theo-dfr} and let $\mathcal{R}_0$, $\mathcal{R}_1$ be two independent renewal processes with interarrival distribution $\mu$ started from some $\mathfrak{T} \leq 0$. Then, given $p_1 > 0$ and $s>0$ there exists $w_1 = w_1(p_1,s) > 0$ such that uniformly on the triplet $\{t,A_{0,t},A_{1,t}\}$ for $t \geq 0$, we have
\[
P\Big( {\rm{d}}\left(\mathcal{R}_0\cap[t,t+s],\mathcal{R}_1\cap[t,t+s] \right) \leq w_1 \Big| A_{0,t},A_{1,t}\Big) \leq p_1, \ \mbox{almost surely.}
\]
\end{lemma}

The next two lemmas state that if we choose $\lambda$ large enough, the infection can survive inside specific space-time boxes with high probability, uniformly on the history of the renewal processes. For both lemmas, consider the RCP($\mu$) with $\mu$ being defined as in Theorem \ref{theo-dfr} and recall Definition \ref{crossing}.

\begin{lemma}
Let $C_h(x,t)$ be as in Definition \ref{crossing} with $b=1$ and let $\epsilon \in (0,1)$.
There exists $\lambda_h = \lambda_h(\epsilon) < \infty$ such that, uniformly on the $5-$tuple $\{t, A_{x,t}, A_{x+1,t}, A_{x+2,t}, A_{x+3,t}\}$ for $t \geq 0$, we have 
\[
P\Big(C_h(x,t)\Big|(A_{x+i,t})_{i=0}^{3}\Big) \geq 1-\epsilon, \ \mbox{almost surely, if} \ \lambda > \lambda_h.
\]
\label{lemmahb-dfr}
\end{lemma}

\begin{lemma}
 Let $C_v(x,t)$ be as in Definition \ref{crossing} with $b=1$ and let $\epsilon \in (0,1)$.
There exists $\lambda_v = \lambda_v(\epsilon) < \infty$ such that, uniformly on the triplet $\{t, A_{x,t}, A_{x+1,t}\}$ for $t \geq 0$, we have 
\[
P\Big(C_v(x,t)\Big|A_{x,t},A_{x+1,t}\Big) \geq 1-\epsilon, \ \mbox{almost surely, if} \ \lambda > \lambda_v.
\]
\label{lemmavb-dfr}
\end{lemma}

We now prove the lemmas.

\begin{proof}
\textit{(Lemma \ref{lemma1-dfr})} 
Let $(X_j)_{j \geq 0}$ be a sequence of i.i.d. random variables with distribution $\mu$, common density $f$ and common distribution function $F$, such that $X_0$ describes the time until the first renewal mark of $\mathcal{R}$ and $X_j$, for $j \geq 1$, describe the time between the $j$-th and $(j+1)$-th renewal marks of $\mathcal{R}$. In addition, denote by $(F^{(j)})_{j \geq 0}$ the distribution function of $X_0 + X_1 + \ldots + X_j$, by $B_t$ the residual life process of $\mathcal{R}$ at time $t$, i.e, the remaining time between $t$ and the next renewal mark of $\mathcal{R}$, and by $E_j$ the event $\{$there are exactly $j$ renewal marks of $\mathcal{R}$ up to time $t\}$. Then, for any $t \geq 0$, $k>0$ and $w>0$, we have that
\[
P \left( \left. \mathcal{R} \cap [t+k,t+k+w] \neq \emptyset \ \right| \ \mathcal{R} \cap [t,t+k] = \emptyset \right) = \sum_{j=0}^\infty P(B_t \leq k+w | B_t > k, E_j)P(E_j)
\]
\[
= P(E_0)P(X_0 \leq t+k+w | X_0 > t+k) + \sum_{j=1}^\infty P(E_j) \int_{0}^t P(B_{t-s} \leq k+w | B_{t-s} > k)dF^{(j-1)}(s) \ \
\]
\[
= P(E_0)\frac{P(X \in [t+k,t+k+w])}{P(X \geq t+k)} + \sum_{j=1}^\infty P(E_j) \int_{0}^t P(X_j \leq t+k+w-s | X_j > t+k-s)dF^{(j-1)}(s)
\]
\[
= P(E_0) \int_{t+k}^{t+k+w}\frac{f(u)}{1-F(t+k)} du + \sum_{j=1}^\infty P(E_j) \int_{0}^t \left[ \int_{t+k-s}^{t+k+w-s} \frac{f(u)}{1-F(t+k-s)} du \right] dF^{(j-1)}(s)  \ \ \
\]

\begin{equation}
\leq P(E_0) \int_{t+k}^{t+k+w}\frac{f(u)}{1-F(u)} du + \sum_{j=1}^\infty P(E_j) \int_{0}^t \left[ \int_{t+k-s}^{t+k+w-s} \frac{f(u)}{1-F(u)} du \right] dF^{(j-1)}(s). \ \ \ 
\label{eqn1}
\end{equation}

Since the hazard rate of $\mu$ is decreasing in $t$, we have that both integrals of $\frac{f(u)}{1-F(u)}$ in \eqref{eqn1} are smaller than or equal to 
\[
\int_{0}^{w} \frac{f(u)}{1-F(u)} du,
\]

\vspace{0.2cm}
\noindent and then we easily see that \eqref{eqn1} is bounded above by
\[
\frac{1}{1-F(w)} \int_{0}^{w}f(u)du \left[ P(E_0) + \sum_{j=1}^\infty P(E_j) \int_{0}^\infty dF^{(j-1)}(s) \right] 
\]
\[
= \frac{F(w)}{1-F(w)} \sum_{j=0}^\infty P(E_j) = \frac{F(w)}{1-F(w)}. \qquad \qquad \qquad \qquad \qquad \qquad
\] 

\vspace{0.1cm}

Since $F$ is a continuous distribution function on $[0,\infty)$, 
we can take $w_0 = w_0(p_0) > 0$ such that $F(w_0) \leq \frac{p_0}{2}$ and then:
\[
\sup_{t \geq 0}  \sup_{k \geq 0} P \left( \left. \mathcal{R} \cap [t+k,t+k+w_0] \neq \emptyset \ \right| \ \mathcal{R} \cap [t,t+k] = \emptyset \right) \leq \frac{F(w_0)}{1-F(w_0)} \leq \frac{p_0}{2 - p_0} \leq p_0. \vspace{-0.4cm}
\]
\end{proof}

\vspace{0.1cm}

\begin{proof}
\textit{(Lemma \ref{lemma2-dfr})} The statement is equivalent to the existence of $w_0 = w_0(p_0)$ such that, uniformly on $\mathfrak{T}$, we have
\[
\sup_{a \geq 0} \frac{\mu([a,a+w_0])}{\mu([a,\infty))} \leq p_0.
\]

\vspace{0.1cm}

But for any fixed $w>0$ and $a>0$,
\[
\frac{\mu([a,a+w])}{\mu([a,\infty))} = \int_{a}^{a+w} \frac{f(u)}{1-F(a)}du \leq \int_{a}^{a+w} \frac{f(u)}{1-F(u)}du.
\]

\vspace{0.1cm}

Since the hazard rate of $\mu$ is decreasing, the last integral above is bounded by
\[
\int_0^w \frac{f(u)}{1-F(u)}du \leq \frac{1}{1-F(w)}\int_0^w f(u)du = \frac{F(w)}{1-F(w)},
\]

\vspace{0.1cm}

\noindent and the conclusion follows as in Lemma \ref{lemma1-dfr}.
\end{proof}

\begin{remark}
The bound on the number of renewal marks of $\mathcal{R}_x$ in a finite time interval, that was obtained in Lemma \ref{lemma2-bounded}, also holds almost surely when we condition to $A_{x,t}$, uniformly on $\{t,A_{x,t}\}$ (enlarging $K_0$ by one if necessary). It occurs because after the first renewal mark of $\mathcal{R}_x$ inside $[t,t+s]$ appears, we are able to forget all information of $\mathcal{R}_x$ previous to time $t$. 
\end{remark}

\begin{proof}
\textit{(Lemma \ref{goodcures-dfr})} 
Given $p_1 > 0$ we use Lemma \ref{lemma2-bounded} for $\mathcal{R}_1$ with $p_0=p_1/2$ and let $\tilde {K}_0=K_0(p_1/2,s)$. We then use Lemma  \ref{lemma2-dfr} with $p_0=p_1/{2\tilde K_0}$, letting $w_1= \frac{1}{2}w_0(\frac{p_1}{2\tilde{K}_0})$. It then follows that 
$$P\Big( {\rm{d}}\left(\mathcal{R}_0\cap[t,t+s],\mathcal{R}_1\cap[t,t+s] \right) \leq w_1 \Big| A_{0,t},A_{1,t}\Big) \leq \frac{p_1}{2\tilde{K}_0}\tilde K_0 + \frac{p_1}{2}=p_1, \ \mbox{almost surely.}$$

The uniformity on $t\geq 0$ follows at once from its validity in Lemmas \ref{lemma2-bounded} and \ref{lemma2-dfr}; and the uniformity on $A_{0,t}, A_{1,t}$ follows from Lemma \ref{lemma2-dfr}. 
\end{proof} 

\begin{proof}
\textit{(Lemma \ref{lemmahb-dfr})} Fix $w_0 \in (0,1)$ satisfying Lemma \ref{lemma2-dfr} for $p_0 = 1- (1-\epsilon)^{\frac{1}{6}}$ and define the events $F_i = F_i(x,t)$ and $G_i = G_i(x,t)$ for $i=0,1,2$ as the following:
\begin{align*}
F_i &= \Big\{\mathcal{R}_{x+i} \cap \mbox{$[t,t+w_0]$} = \emptyset\Big\},\\
G_i &= \Big\{\mathcal{N}_{x+i,x+i+1} \cap \mbox{$[t+\frac{i w_0}{3},t+\frac{(i+1)w_0}{3}]$} \neq \emptyset\Big\}.
\end{align*}

Note that the occurrence of $\bigcap_{i=0}^2 (F_i \cap G_i)$ implies the occurrence of $C_h(x,t)$, as in the proof of Lemma \ref{lemmahb-bounded} (see Figure \ref{fig1-bounded-revised}). Applying Lemma \ref{lemma2-dfr} we have that uniformly on $\{t,A_{x+i,t}\}$, for $i=0,1,2$ and $t \geq 0$, 
\begin{equation}
\label{F}
P(F_i|A_{x+i,t}) = P(\mathcal{R}_x \cap [t,t+w_0] = \emptyset | A_{x,t}) \geq (1-\epsilon)^{\frac{1}{6}}, \ \mbox{almost surely}.
\end{equation}

Now let $Y_{\lambda}$ denote a random variable with exponential distribution with rate $\lambda$. Since $w_0 > 0$, we can fix $\lambda_h$ such that $P(Y_{\lambda_h} \leq \frac{w_0}{3}) = (1-\epsilon)^{\frac{1}{6}}$. If $\lambda \geq \lambda_h$, by the Markov property of the  Poisson process, we have that for $i=0,1,2$ and any $t \geq 0$,
\begin{equation}
\label{G}
P(G_i) =  P\Big(Y_{\lambda} \leq \frac{w_0}{3}\Big) \geq (1-\epsilon)^{\frac{1}{6}}.
\end{equation}
But the events $F_0,G_0,F_1,G_1,F_2,G_2$ are all independent, and therefore the conclusion follows from \eqref{F} and \eqref{G}.
\end{proof}

\begin{proof}
\textit{(Lemma \ref{lemmavb-dfr})}  Since $B_v(x,t)$ contains only two sites, the path might have to jump between them until time $t+3$, avoiding cure marks, as in the proof of Lemma \ref{lemmavb-bounded} (see Figure \ref{fig2-bounded}).

Fix $p_0 = p_1 = \frac{1-\sqrt{1-\epsilon}}{5}$ and $s = 3$. Apply Lemmas \ref{lemma2-bounded}, \ref{lemma2-dfr} and \ref{goodcures-dfr} for $\mathcal{R}_x$ and $\mathcal{R}_{x+1}$, letting $\tilde{w}_0 = w_0(p_0)$, $\tilde{K}_0 = K_0(p_0,s)$ and $\tilde{w}_1 = w_1(p_1,s)$. Defining $H_t$ as the intersection of the events

\[
\Big\{ \mathcal{R}_x \cap [t,t+\tilde{w}_0] = \emptyset,  \ \mathcal{R}_{x+1} \cap [t,t+\tilde{w}_0] = \emptyset \Big\},
\]

\[
\Big\{ | \ \mathcal{R}_x \cap [t,t+3] \ | \leq \tilde{K}_0, \ | \ \mathcal{R}_{x+1} \cap [t,t+3] \ | \leq \tilde{K}_0 \Big\},
\]

\[
\Big\{{\rm{d}}\Big(\mathcal{R}_x\cap[t,t+3],\mathcal{R}_{x+1}\cap[t,t+3] \Big) > \tilde{w}_1  \Big\},
\]

\vspace{0.2cm}

\noindent it follows from these three lemmas that uniformly on the triplet $\{t,A_{x,t},A_{x+1,t}\}$, for $t \geq 0$, we have $P(H_t|A_{x,t},A_{x+1,t}) \geq \sqrt{1-\epsilon}$ almost surely. The occurrence of $H_t$ guarantees that a path with the desired crossing property can be constructed by suitably hopping between $x$ and $x+1$ at most $2 \tilde{K_0}$ times to reach time $t+3$ and that it will always encounter an interval of length at least $\tilde{w} = \tilde{w}_0 \wedge \tilde{w}_1$ without cure marks to execute each hop. Fix $\lambda_v$ such that $P(Y_{\lambda_v} \leq \tilde{w}) = (1 - \epsilon)^{\frac{1}{4\tilde{K}_0}}$, and hence it follows by the Markov property of the Poisson process that for any $\lambda \geq \lambda_v$, uniformly on the triplet $\{t,A_{x,t},A_{x+1,t}\}$, for $t \geq 0$, we have
\[
P(C_v(x,t)|A_{x,t},A_{x+1,t}) \geq P(H_t|A_{x,t},A_{x+1,t}) \left[P(Y_{\lambda_v} \leq \tilde{w})\right]^{2\tilde{K}_0} \geq 1 - \epsilon, \ \mbox{almost surely.}
\]

\end{proof}

\begin{corollary}
\label{coroboxes-dfr}

Let $\mathcal{F}_t$ be the filtration generated by the processes $(\mathcal{R}_x)_{x \in \mathbb{Z}}$ and $(\mathcal{N}_{x,y})_{(x,y) \in \mathcal{E}}$. The uniform bounds obtained in Lemmas \ref{lemmahb-dfr} and \ref{lemmavb-dfr} also hold almost surely if we condition on $\mathcal{F}_t$.

\end{corollary}

\begin{proof}
\textit{(Corollary \ref{coroboxes-dfr})} The space-time boxes $B_h(x,t)$ and $B_v(x,t)$, related to the events $C_h(x,t)$ and $C_v(x,t)$, are entirely above the timeline $t$. So, these events are independent of the transmission marks up to time $t$ and depend on the cure marks up to time $t$ only through the position of the last cure mark previous to time $t$ in some vertices.
\end{proof}

Now we are finally ready to present the proof of Theorem \ref{theo-dfr}.

\begin{proof}

\textit{(Theorem \ref{theo-dfr})} Here we will construct a percolation system related to the RCP($\mu$) in the same way that we did in the proof of Theorem \ref{theo-bounded}, taking $b=1$. Given $x \in \mathbb{Z}$ and $t > 0$, consider the space-time boxes hereby called blocks, $K(x,t) = [x,x+3] \times [t,t+3]$. Each block $K(x,t)$ is said to be \textit{good} if $C_h(x,t) \cap C_h(x,t+2) \cap C_v(x,t) \cap C_v(x+2,t)$ occurs and \textit{bad} otherwise. See  for instance the left side image of Figure \ref{fig3}.   

We consider a graph $G$ in $\mathbb{Z}^2$ with oriented edges $\langle z,z + (1,0) \rangle$ and $ \langle z,z + (0,1) \rangle$, and define a percolation system on $G$ related to the RCP($\mu$) by stating that the vertex $(x,y)$ is open if $K(2x,3y)$ is a good block in the RCP($\mu$) and closed otherwise. Then, each edge is open if both endvertices are open. Note that this construction allows us to concatenate paths of adjacent good blocks, implying that when the oriented edge model percolates, the associated RCP($\mu$) will survive with positive probability. See Figure \ref{fig3}.

This percolation model exhibits long-range dependencies along the vertical direction (since the positions of the cure marks in the associated RCP are non-Markovian) and finite range dependency in the first coordinate (since horizontally adjacent blocks overlap each other). But, given $\epsilon > 0$, Lemmas \ref{lemmahb-dfr}, \ref{lemmavb-dfr} (and Corollary \ref{coroboxes-dfr}) and union bounds imply that if we take $\lambda \geq \lambda_h(\frac{\epsilon}{4}) \vee \lambda_v(\frac{\epsilon}{4})$, then the probability that a site of $G$ is open, given any possible configurations of the other sites with lower vertical coordinate, is bounded from below by $1- \epsilon$. Applying ~\cite[Theorem 0.0 (i)]{LSS} we can conclude that this percolation model dominates a Bernoulli random field with density $\rho$ which can be arbitrarily close to $1$, provided we take $\epsilon$ small enough. As a consequence, we have that the RCP($\mu$) survives with positive probability for any $\lambda \geq \lambda_h(\frac{\epsilon}{4}) \vee \lambda_v(\frac{\epsilon}{4})$.\\ 
\end{proof}


\begin{thebibliography}{99}

\bibitem{Durrett}
R. Durrett: Ten Lectures on particle systems. (Ecole d'Et\'e de Probabilit\'es de Saint-Flour XXIII, 1993) {\it Lecture Notes in Math.}, {\bf 1608}, 97--201, Springer, Berlin (1995)

\bibitem{F}
L. R. Fontes: Contact Process under renewal cures. An overview of recent results. {\it Matemática Contemporânea}, {\bf 58}, 234--263  (2023).


\bibitem{FMMV1}
L. R. Fontes,  D.H.U. Marchetti, T.S. Mountford, M. E. Vares: Contact process under renewals I. {\it Stoch. Proc. Appl.}, {\bf 129}(8), 2903--2911 (2019).

\bibitem{FGS}
 L. R. Fontes, P. Gomes, R. Sanchis: Contact process under heavy-tailed renewals on finite graphs. {\it Bernoulli}, {\bf 27}(3), 1745--1763 (2020).

\bibitem{FMV2}
L. R. Fontes, T.S. Mountford, M. E. Vares: Contact process under renewals II. {\it Stoch. Proc. Appl.}, {\bf 130}(2), 1103--1118 (2020).

\bibitem{FMUV}
L. R. Fontes, T.S. Mountford, D. Ungaretti, M. E. Vares: Renewal Contact Processes: phase transition and survival. {\it Stoch. Proc. Appl.}, {\bf 161}, 102--136 (2023).

\bibitem{Harris}
T. E. Harris: Contact interactions on a lattice.  {\it The Annals of Probability}, {\bf 2}(6) 969--988 (1974)

\bibitem{Harris2}
T. E. Harris: Aditive Set-Valued Markov Process and Graphical Methods.  {\it The Annals of Probability}, {\bf 6}(3) 355--378 (1978)

\bibitem{HUVV}
M. Hilário, D. Ungaretti, D. Valesin, M. E. Vares: Results on the contact process with dynamic edges or under renewals. {\it Electron. J. Probab.} {\bf 27}, 1--31 (2022). 

\bibitem{Liggett}
T. M. Liggett: Interacting Particle Systems. {\it Grundlehren der Mathematischen Wissenschaften} {\bf 276}, New York: Springer (1985)

\bibitem{LSS}
T. Liggett, R. Schonmann, A. Stacey: Domination by product measures. {\it The Annals of Probability}, {\bf 25}(1), 71-95 (1997)

\bibitem{LR}
A. Linker, D. Remenik: The contact process with dynamic edges on $\mathbb{Z}$. {\it Electron. J. Probab.} {\bf 25}, 1--21 (2020)  

\end{thebibliography}
\end{document}